\title[Homotopy theoretical considerations of Seiberg--Witten invariants]
{Homotopy theoretical considerations 
of the Bauer--Furuta\\stable homotopy Seiberg--Witten invariants}
\author{Mikio Furuta}
\address{Department of Mathematical Sciences\\
University of Tokyo\\\newline
Tokyo 153-8914\\
Japan}
\email{furuta@ms.u-tokyo.ac.jp}
\urladdr{}
\author{Yukio Kametani}
\address{Department of Mathematics\\
Keio University\\\newline
Yokohama 223-8522\\
Japan}
\email{kametani@math.keio.ac.jp}
\urladdr{}
\author{Hirofumi Matsue}
\address{Department of Economics\\
Seijo University\\\newline
Tokyo 157-8511\\
Japan}
\email{matsue@seijo.co.jp}
\urladdr{}
\author{Norihiko Minami}
\address{Department of Mathematics\\
Nagoya Institute of Technology\\\newline
Nagoya 466-8555\\
Japan}
\email{minami.norihiko@nitech.ac.jp}
\urladdr{}
\def\cnewtheorem#1[#2]#3{\newtheorem{#1}{#3}[section]
\expandafter\let\csname c@#1\endcsname\c@theorem}
\let\xysavmatrix\xymatrix
\def\xymatrix{\disablesubscriptcorrection\xysavmatrix}
\newtheorem{theorem}{Theorem}[section]
\newtheorem*{H-S}{Theorem (Hovey--Sadofsky)}
\theoremstyle{definition}
\theoremstyle{remark}
\numberwithin{equation}{section}
\newcommand{\la}{\langle}
\newcommand{\ra}{\rangle}
\newcommand{\Min}{ \operatorname{Min} }
\newcommand{\Max}{ \operatorname{Max} }
\newcommand{\Znn}{{\mathbb{Z}_{\geq 0}}}
\newcommand{\N}{\mathbb{N}}
\newcommand{\HH}{\mathbb{H}}
\newcommand{\dotbox}{\hbox to 1em{\hss.\hss}}
\newcommand{\level}{{\rm level}}
\newcommand{\slevel}{{\rm slevel}}
\newcommand{\colevel}{{\rm colevel}}
\newcommand{\scolevel}{{\rm scolevel}}
\newcommand{\sign}{{\rm sign}}
\newcommand{\link}{{\rm link}}
\newcommand{\slink}{{\rm slink}}
\begin{document}

\begin{htmlabstract} 
<p class="noindent">
We show the "non-existence" results are
essential for all the previous known applications of 
the Bauer&ndash;Furuta stable homotopy Seiberg&ndash;Witten invariants.
As an example, we present a unified proof of the adjunction
inequalities.</p>

<p class="noindent">
We also show that the nilpotency phenomenon explains
why the Bauer&ndash;Furuta stable homotopy Seiberg&ndash;Witten invariants
are not enough to prove 11/8&ndash;conjecture.</p>
\end{htmlabstract}

\begin{abstract} 
We show the ``non-existence'' results are
essential for all the previous known applications of 
the Bauer--Furuta stable homotopy Seiberg--Witten invariants.
As an example, we present a unified proof of the adjunction
inequalities.

We also show that the nilpotency phenomenon explains
why the Bauer--Furuta stable homotopy Seiberg--Witten invariants
are not enough to prove 11/8--conjecture.
\end{abstract}

\maketitle

\section{Background}
\label{sec1}

Nowadays, there are many applications of
the Bauer--Furuta stable homotopy Seiberg--Witten
invariants (see the work of Bauer and Furuta \cite{B,BF,F2}). Amongst
of all, we single out the following:
\begin{enumerate}
\item[(1)] $10/8$--theorem (see Furuta \cite{F1}),
\item[(2)] Adjunction inequalities (see the work of Furuta, Kametani,
Matsue and Minami \cite{FKMaMi,FKMa,FKMi}),
\item[(3)] Constructions of spin $4$--manifolds without
Einstein metric and computations of their Yamabe invariants
(see the work of Ishida and LeBrun~\cite{IL2,IL1,L}).
\end{enumerate}

However, there is some ramification amongst the
proofs of these results. Actually, (1) is
proven by reducing to ``non-existence'' results,
whereas (2) and (3) are proven by reducing to
``non-triviality'' results. Furthermore,
the techniques employed to show these  
``non-existence'' and ``non-triviality'' 
results have been different so far,
and experts have regarded them as rather independent results.

In this paper, we unify these
two approaches and deliver the following message:
%
To apply the Bauer--Furuta stable homotopy Seiberg--Witten 
invariants 
in the direction of (1), (2), and (3), 
it suffices to prove the ``non-existence'' results
(which is the standard way to attack $11/8$--conjecture ever 
since Furuta's celebrated paper \cite{F1}).

Now, there are a couple of advantages of our approach.
First, we can generalize the known results
for (2) and (3) to slightly wider classes of $4$--manifolds.
Second, in the course of our proof, we can conceptually
recognize how the ``nilpotency phenomenon'' is
responsible for the fact why we can \emph{never} 
prove the $11/8$--conjecture affirmatively by the 
Bauer--Furuta stable homotopy Seiberg--Witten invariants.

The basic ingredient of the present paper was originally
announced in the JAMI conference on Geometry and Physics
at the Johns Hopkins University in March, 2002.
The authors would like to thank JAMI for its hospitality.
The fourth author was partially supported by Grant-in-Aid
for Scientific Research No.13440020, Japan Society for the
Promotion of Science.


\section{Level and the $11/8$--conjecture 
}
\label{sec2}

In this section we review the concept of level for 
free $\Z/2$--spaces, and recall the results of
Stolz \cite{S}, Furuta \cite{F2}, Furuta--Kametani \cite{FK}
and others in this terminology.

\begin{definition}[see Dai--Lam~\cite{DL} and Dai--Lam--Peng~\cite{DLP}]
For a free $\Z/2$--space 
$X,$ define the level of $X,$ which we denote by $\level(X),$ 
as follows:
\[ 
\level(X) := \text{the smallest
$n$ such that }\
[X, S^{n-1}]^{\Z/2} \neq \emptyset,
\]
where $S^{n-1}$ is endowed with the antipodal $Z/2$--action.
\end{definition}

\begin{example}
\label{levelexample}
{\rm (i)}\qua The classical Borsuk--Ulam theorem
states $\level(S^m) = m+1$.

{\rm (ii)}\qua (see Stolz~\cite{S})\qua
For $\R P^{2m-1} = S(\C^m)/\la i^2\ra$ with multiplication by $i,$
\[
\level( \R P^{2m-1} )=
\begin{cases}
m+1 & \text{if}\ m \equiv 0, 2 \mod 8  \\
m+2 & \text{if}\ m \equiv 1,3,4,5,7 \mod 8  \\
m+3 & \text{if}\ m \equiv 6 \mod 8  
\end{cases}
\]
\end{example}

The concept of level comes into the
picture of $4$--manifolds because of the following 
consequence of the Bauer--Furuta stable homotopy Seiberg--Witten
invariants for closed Spin $4$--manifolds (see Furuta \cite{F1},
Furuta--Kametani \cite{FK}, and \fullref{sec6}),
via the ``$G$--join theorem'' (see Minami~\cite{Min} and
Schmidt~\cite{Sch}): 

\begin{theorem} \label{CP}
If there is a closed Spin 4--manifold  
with $k = -\frac{\sign(X)}{16}$ and $l=b_2^+(X)$, then
\[
l \geq \level( \C P^{2k-1} ).
\]
Here, via the unit sphere $S(\HH^k)$ of the direct sum of
the $k$--copies of the quaternions $\HH,$
$\C P^{2k-1} = S(\HH^k)/\{\cos\theta + i\sin\theta \mid
0 \leq \theta\leq 2\pi \}$ is endowed with $\Z/2$--action 
induced by multiplication by $j\in\HH.$ 
\end{theorem}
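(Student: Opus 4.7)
The plan is to extract from the Bauer--Furuta stable homotopy Seiberg--Witten invariant of $X$ a $\Z/2$-equivariant map
\[
\C P^{2k-1}\;\longrightarrow\;S^{l-1},
\]
which by the very definition of level immediately yields $\level(\C P^{2k-1})\leq l$.

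As will be reviewed in \fullref{sec6}, the Bauer--Furuta construction applied to the Spin $4$-manifold $X$ furnishes, after a finite-dimensional approximation, a proper $\Pin(2)$-equivariant map
\[
\tilde\mu\colon k\HH\oplus m\tilde{\R}\;\longrightarrow\;l\tilde{\R}\oplus m\tilde{\R}
\]
for some sufficiently large integer $m$, where $\HH$ is the tautological quaternionic representation of $\Pin(2)$ and $\tilde{\R}$ is the sign representation on which $S^1\subset\Pin(2)$ acts trivially and $j$ acts by $-1$. Properness lets me retract $\tilde\mu$ onto a $\Pin(2)$-equivariant map of unit spheres; identifying the unit sphere of an orthogonal direct sum with the join of the component unit spheres, this reads
\[
\bar\mu\colon S(k\HH)\ast S(m\tilde{\R})\;\longrightarrow\;S((l+m)\tilde{\R}).
\]

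Next I take the $S^1$-quotient of $\bar\mu$. Because $S^1\subset\Pin(2)$ acts freely on $S(k\HH)=S^{4k-1}$ with quotient $\C P^{2k-1}$, and trivially on both $S(m\tilde{\R})$ and $S((l+m)\tilde{\R})$, the quotient map is a $\Z/2$-equivariant map
\[
\C P^{2k-1}\ast S^{m-1}\;\longrightarrow\;S^{l+m-1},
\]
where $\Z/2=\Pin(2)/S^1$ acts on $\C P^{2k-1}$ by multiplication by $j$ (exactly the action described in the theorem, and fixed-point-free since $jq=e^{i\theta}q$ has no nonzero solution in $\HH^k$), and antipodally on both $S^{m-1}$ and $S^{l+m-1}$. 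All three are thus free $\Z/2$-spaces. Invoking the $G$-join theorem of Minami~\cite{Min} and Schmidt~\cite{Sch}, which gives additivity of level under $\Z/2$-joins of free $\Z/2$-spaces, together with the Borsuk--Ulam value $\level(S^{m-1})=m$ from \fullref{levelexample}(i), the existence of this map yields
\[
\level(\C P^{2k-1})+m\;=\;\level\!\bigl(\C P^{2k-1}\ast S^{m-1}\bigr)\;\leq\;l+m,
\]
which cancels to $\level(\C P^{2k-1})\leq l$, as desired.

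The main obstacle will be the passage from the proper map $\tilde\mu$ to the unit-sphere map $\bar\mu$: one must verify that the finite-dimensional approximation of the Seiberg--Witten map is proper in a way compatible with the \emph{full} $\Pin(2)$-action rather than only its $S^1$ or $\langle j\rangle$ subactions, and that the radial retraction onto unit spheres can be carried out equivariantly. Once this technical point is settled, the $S^1$-quotient step and the $G$-join theorem combine essentially formally to give the claimed bound.
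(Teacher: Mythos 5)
Your overall strategy — extract from the Bauer--Furuta invariant a proper $\Pin(2)$-map between representation spheres, pass to $S^1$-quotients to land in $\Z/2$-equivariant maps involving $\C P^{2k-1}$, and then invoke the $G$-join theorem to strip off the extra join factors — is exactly the circle of ideas the paper gestures at via the citations to Furuta, Furuta--Kametani, \fullref{sec6}, and the $G$-join theorem. However, two steps as you have written them contain genuine gaps.

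First, the finite-dimensional approximation does not produce a proper $\Pin(2)$-map of the form $k\HH\oplus m\tilde\R\to l\tilde\R\oplus m\tilde\R$: as the very definition in \fullref{sec6} records, a representative lives in $[\,S(\HH^{p+k}\oplus\tilde\R^{q}),\,S(\HH^{p}\oplus\tilde\R^{q+l})\,]^{\Pin_2}$ for some $p,q>0$, so the target carries an $\HH^p$ summand that you have silently dropped. You cannot set $p=0$ at the outset; eliminating the $\HH^p$ on both sides is a destabilization problem, not a free choice. After the $S^1$-quotient you would actually obtain a $\Z/2$-map $\C P^{2(p+k)-1}*S^{q-1}\to\C P^{2p-1}*S^{q+l-1}$, not $\C P^{2k-1}*S^{m-1}\to S^{l+m-1}$, and extracting $\level(\C P^{2k-1})\le l$ from this needs a further cancellation argument.

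Second, the ``additivity of level under $\Z/2$-joins'' is not what the $G$-join theorem says, and it is not unconditionally true. The inequality $\level(X*S^{m-1})\le\level(X)+m$ is formal, but the direction you need, $\level(X)+m\le\level(X*S^{m-1})$, is a desuspension statement that holds only under dimension/connectivity hypotheses. This is precisely the content of \fullref{stable}(iii): $\slevel_{\Z/2}(X)=\level(X)$ only when $\dim X\le 2\,\slevel_{\Z/2}(X)-1$. For $X=\C P^{2k-1}$ this requires knowing $\slevel_{\Z/2}(\C P^{2k-1})\ge 2k$, which is supplied by Furuta's theorem $l\ge 2k+1$ (equivalently, Stolz's computation for $\R P^{4k-1}$ pulled back along $\R P^{4k-1}\to\C P^{2k-1}$). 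That inequality is an essential input — the paper cites Furuta~\cite{F1} explicitly for this reason via the translation diagram \eqref{translation} — and your proposal never invokes it.

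So the correct route is: the Bauer--Furuta class shows $\{S(\HH^k),S(\tilde\R^l)\}^{\Pin_2}\ne\emptyset$; taking $S^1$-quotients of a representative gives a nontrivial element of the stable $\Z/2$-mapping set, whence $\slevel_{\Z/2}(\C P^{2k-1})\le l$; and then Furuta's bound $l\ge 2k+1$ provides the dimension hypothesis needed for the $G$-join theorem to promote this to $\level(\C P^{2k-1})\le l$. Your writeup has the right ingredients in mind but replaces two nontrivial steps (eliminating the $\HH^p$ stabilization, and the hard direction of join cancellation) with assertions that do not hold as stated.
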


Such a result is interesting because the celebrated
Yukio Matsumoto November the 8th birthday conjecture
\cite{Mats} (commonly called ``$11/8$--conjecture'')
predicts:
\[
  l \geq 3k
\]
Thus, the determination of $\level( \C P^{2k-1} )$
is clearly important. It is easy to see $\level( \C P^{2k-1} )
\leq 3k.$  On the other hand, since 
$\R P^{4k-1} = S(\HH^k)/\{\pm 1\},$ there is an
obvious $\Z/2$--map  $\R P^{4k-1} \to \C P^{2k-1},$ and so
\[
3k\geq \level( \C P^{2k-1} ) \geq \level( \R P^{4k-1} ) =
\begin{cases}
2k + 1 \quad &\text{if}\ k \equiv 1  \mod 4 \\
2k + 2 \quad &\text{if}\ k \equiv 2  \mod 4 \\
2k + 3 \quad &\text{if}\ k \equiv 3  \mod 4\\
2k + 1 \quad &\text{if}\ k \equiv 4 \mod 4
\end{cases}
\]
where we used Stolz' theorem.

Clearly, the interesting questions are: how large $\level( \C P^{2k-1} )$
could be, and whether $\level( \C P^{2k-1} )$  is ever shown 
to be $3k.$

However, John Jones realized
$\level( \C P^{2k-1} ) \neq 3k$ via explicit computations
for some small $k,$ and conjectured
\[
\level( \C P^{2k-1} ) =
\begin{cases}
2k + 2 \quad &\text{if}\ k \equiv 1 \mod 4,\ k>1 \\
2k + 2 \quad &\text{if}\ k \equiv 2 \mod 4 \\
2k + 3 \quad &\text{if}\ k \equiv 3 \mod 4\\
2k + 4 \quad &\text{if}\ k \equiv 4 \mod 4
\end{cases}
\]

Now the current best result in this direction is the following:

\begin{theorem}[Furuta--Kametani \cite{FK}]
\label{FK}
\[
\level( \C P^{2k-1} ) \geq 
\begin{cases}
2k + 1 \quad &\text{if}\ k \equiv 1 \mod 4 \\
2k + 2 \quad &\text{if}\ k \equiv 2 \mod 4 \\
2k + 3 \quad &\text{if}\ k \equiv 3 \mod 4\\
2k + 3 \quad &\text{if}\ k \equiv 4 \mod 4
\end{cases}
\]
\end{theorem}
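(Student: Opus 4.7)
I would organize the proof around the residue of $k$ modulo~$4$. For $k\not\equiv 0\pmod 4$ the claimed bound already follows from the displayed inequality just above the theorem, which gives $\level(\C P^{2k-1})\ge\level(\R P^{4k-1})$ via the double covering $\R P^{4k-1}\to\C P^{2k-1}$, together with Stolz' computation recorded in Example~\ref{levelexample}: the three asserted values $2k+1,2k+2,2k+3$ for $k\equiv 1,2,3\pmod 4$ match exactly what Stolz produces. Thus the entire content of the theorem is concentrated in the residue $k\equiv 0\pmod 4$, where the Stolz-plus-covering route yields only $2k+1$ and one must upgrade this to $2k+3$.

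For $k\equiv 0\pmod 4$ my plan is to argue by contradiction: suppose a $\Z/2$-equivariant map $f\colon\C P^{2k-1}\to S^{n-1}$ exists with $n\in\{2k+1,2k+2\}$. Write $Y:=\C P^{2k-1}/\langle j\rangle$ and $\xi\to Y$ for the real line bundle associated to the double cover; then the existence of $f$ is equivalent to that of a nowhere-zero section of $n\xi\to Y$. To make $Y$ accessible to characteristic-class computations I would present it via the two-stage quotient of $S(\HH^k)$ by the normalizer $N(S^1)\subset S^3$: this exhibits $Y$ as an $\R P^2$-bundle over $\HH P^{k-1}$, so that its ordinary and $K$-theoretic cohomology are computable from a Leray--Serre spectral sequence and $\xi$ (together with its tensor powers) is expressible in terms of tautological classes on fibre and base.

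The obstruction detecting the refinement cannot live in mod-$2$ cohomology, since Stolz' theorem is essentially an $H^*(-;\Z/2)$ argument and is blind to the gap between $2k+1$ and $2k+3$. My proposal is therefore to apply Adams operations on $\widetilde{KO}$ (notably $\psi^3$ and $\psi^{-1}$) to the $KO$-theoretic Thom class $U(n\xi)\in\widetilde{KO}(Y^{n\xi})$: a nowhere-zero section forces $U(n\xi)$ to satisfy a divisibility relation whose $2$-adic valuation, in the residue $k\equiv 0\pmod 4$, is sharp enough to rule out both $n=2k+1$ and $n=2k+2$ while still permitting $n=2k+3$. The principal obstacle will be carrying out this $\widetilde{KO}$-computation on $Y$ with sufficient precision: one must combine the real structure on $\xi$ (pulled back from $\R P^\infty$, where $\psi^3(\xi-1)=\xi-1$) with the quaternionic structure on the base $\HH P^{k-1}$ (where the Adams operations act by characteristic polynomials of significant $2$-adic content), and then track the Thom class through a spectral sequence with non-trivial differentials and extensions. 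Getting this $2$-adic bookkeeping exactly right is the heart of the argument.
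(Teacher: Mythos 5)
The paper does not prove this theorem. It is stated as a result of Furuta--Kametani \cite{FK}, and the surrounding text only remarks that what is actually established there is a ``stable'' version (equivalent to the unstable level statement above via the $G$-join theorem). So there is no in-paper proof to compare your attempt against; one can only ask whether your outline is a reasonable route to the cited result.

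On that score, your reduction is correct: for $k\equiv 1,2,3\pmod 4$ the claimed inequality is exactly the display preceding the theorem (the $\Z/2$-covering $\R P^{4k-1}\to\C P^{2k-1}$ together with Stolz), and all the new content is concentrated in $k\equiv 0\pmod 4$. Your translation of $\level(\C P^{2k-1})\le n$ into a nowhere-zero section of $n\xi$ over $Y=\C P^{2k-1}/\langle j\rangle$, and the fibration $\R P^2\to Y\to\HH P^{k-1}$, are also both correct, and the passage to $KO$-theoretic Adams operations is the right kind of machine (Furuta--Kametani work $\Pin_2$-equivariantly and $K$/$KO$-theoretically, which the $G$-join theorem identifies with what you set up on $Y$). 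Two issues remain. First, your stated reason for needing $KO$-theory --- that ``Stolz' theorem is essentially an $H^*(-;\Z/2)$ argument'' --- is mistaken: Stolz's computation of $\level(\R P^{2m-1})$ is itself a $KO$-theoretic ($\eta$-invariant / Adams-operation) argument, so the correct reason to leave the covering route is not that mod-2 cohomology is blind but that $\R P^{4k-1}\to\C P^{2k-1}$ furnishes only a lower bound and the $KO$-theory of $Y$ differs from that of the lens space covered by $\R P^{4k-1}$. Second, and more seriously, the $2$-adic Thom-class computation that would have to rule out $n=2k+1,\,2k+2$ precisely when $k\equiv 0\pmod 4$ is not carried out; you explicitly defer it as ``the heart of the argument.'' That computation is the entire non-trivial content of the theorem, so as written the proposal is a plan rather than a proof.
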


We note that the Bauer--Furuta stable homotopy Seiberg--Witten
invariants are defined ``stably,'' 
(see \fullref{sec6}) and 
Furuta--Kametani \cite{FK} (also the original work of Furuta~\cite{F1})
actually proved the ``stable'' version (which is
actually equivalent to the above statement via the 
$G$--join theorem).

Whereas Stolz~\cite{S} and Furuta--Kametani~\cite{FK} are
``non-existence'' results, other kinds of applications of the
Bauer--Furuta stable homotopy Seiberg--Witten invariants (see
\fullref{sec1} for more details) require ``non-triviality'' results.

To unify ``non-existence'' and
``non-triviality'' approaches, we generalize
the concept of level in the next section.

\section{Level, colevel, and their stable analogues} \label{sec3}
In this section, we present some very general definitions.

\begin{definition} Fix a topological group $G$ and a non-empty
$G$--space $A,$ and let $X$ and $Y$ be arbitrary $G$--spaces.

{\rm (i)}\qua Denote the iterated join $*^{k}A$ inductively
so that $*^{0}A = A,\ *^{k}A = A*(*^{k-1}A).$
We also understand  $(*^{-1}A)*Y = Y.$ Then, set
$\link_{A}(X,Y),$ \emph{link} of $X$ to $Y$
with respect to $A,$ by  
\[
\link_{A}(X,Y) :=
\begin{cases}
\Min \{n\in \Znn \mid  [X, (*^{n-1}A)*Y]^G \neq \emptyset \} 
\quad &\text{if}\ [X,Y]^G \neq \emptyset; \\
- \Max \{n\in \N \mid  [(*^{n-1}A)*X, Y]^G \neq \emptyset \} 
\quad &\text{if}\ [X,Y]^G = \emptyset,  
\end{cases}
\]
where we set 
{\small
\[
\begin{cases}
\Min \{n\in \Znn \mid  [X, (*^{n-1}A)*Y]^G \neq \emptyset \} 
&= +\infty \\
&\text{if }
\{n\in \Znn \mid  [X, (*^{n-1}A)*Y]^G \neq \emptyset \} 
= \emptyset   \\
- \Max \{n\in \N \mid  [(*^{n-1}A)*X, Y]^G \neq \emptyset \} 
&=  -\infty \\
&\text{if }
 [(*^{n-1}A)*X, Y]^G \neq \emptyset\ \text{for any}\ 
n\in\N.
\end{cases}
\]
}
%
{\rm (ii)}\qua Furthermore, set
$\slink_{A}(X,Y),$ \emph{stable link} of $X$ to $Y$
with respect to $A,$ by  
\[
\slink_{A}(X,Y) := \lim_{q\rightarrow \infty}
\link\bigl( \big( *^{q-1} A \big)*X, \big( *^{q-1} A \big)*Y \bigr).
\]
{\rm (iii)}\qua For extreme cases, set the \emph{level},
\emph{stable level}, \emph{colevel}, and
\emph{stable colevel} with respect to $A$ by:
\begin{align*}
\level_{A}(X) &:= \link_{A}(X, A) + 1  &
\slevel_{A}(X) &:= \slink_{A}(X, A) + 1  \\
\colevel_{A}(Y) &:= -\link_{A}(A, Y) + 1  &
\scolevel_{A}(Y) &:= -\slink_{A}(A, Y) + 1
\end{align*}
\end{definition}

\begin{remark} \label{stable}
(i)\qua Clearly, $\slink_{A}(X,Y)\leq \link_{A}(X,Y).$
Consequently, $\slevel_{A}(X)\leq \level_{A}(X)$ and
$\scolevel_{A}(X)\geq \colevel_{A}(X).$

(ii)\qua When $G=\Z/2$ and 
$A = \Z/2$ with the free $\Z/2$--action,  
then $\level_{\Z/2}$ and $\colevel_{\Z/2}$ 
are respectively the classical level and the colevel 
in the sense of Dai and Lam \cite{DL} (and \fullref{sec2}). 
This is because
\[
*^{n-1}\Z/2 = *^{n-1}S(\R) = S(\R^n) = S^{n-1},
\]
where $\R$ with the sign representation, 
$S^{n-1}$ with the antipodal action.

(iii)\qua 
Suppose $X$ is a free $\Z/2$--space such that
\[
\dim X \leq 2\cdot \slevel_{\Z/2}(X) - 1,
\]
then
$
\slevel_{\Z/2}(X) = \level_{\Z/2}(X) = \level(X).
$

Actually, this is a direct consequence of
the $G$--join theorem (see Schmidt~\cite{Sch} and Minami~\cite{Min},
and examples satisfying this condition include
$S^{m}, \R P^{2m-1}, \C P^{2k-1}$ (cf Stolz~\cite{S} and Furuta~\cite{F1}).
\end{remark}

\section{Level and ``non-triviality''}

We begin with the fundamental question which relates
``non-triviality'' problem to the concept of level.

\begin{question} \label{question}
When $n:=\level_{G/H}(X),$ does the restriction
$[X, *^{n-1}G/H]^G \to [X, *^{n-1}G/H]^H$
ever hit a constant map?
\end{question}

\begin{remark} \label{qr}
(i)\qua When $G=\Z/2,\ H=\{e\}$ and
$n > \level_{G/H}(X),$ the non-empty image of the composite
\[
[X, *^{n-2}G/H]^G  \to [X, *^{n-1}G/H]^G  
\to [X, *^{n-1}G/H]^H
\]
consists of the constant maps.  In fact, this follows
immediately from the triviality of the bottom arrow
in the following commutative diagram:
\[
\begin{CD}
[X, *^{n-2}G/H]^G  @>>> [X, *^{n-1}G/H]^G  \\
@VVV   @VVV    \\
[X, *^{n-2}G/H]^H  @>>> [X, *^{n-1}G/H]^H   \\
@|    @|   \\
[X, S^{n-2}]  @>>> [X,S^{n-1}]
\end{CD}
\]
(ii)\qua
When $G=\Z/2,\ H=\{e\}$ and $X =S^{n-1}$ with the
antipodal $\Z/2$--action, as was remarked in
\fullref{levelexample}~(ii), the classical Borsuk--Ulam
theorem states 
$\level_{G/H}(X) = n.$ In this case, the other version of
the classical Borsuk--Ulam theorem states that
\[
\begin{CD}
[X, *^{n-1}G/H]^G  @>>> [X, *^{n-1}G/H]^H  \\
@|  @|   \\
[S^{n-1}, S^{n-1}]^{\Z/2} @>>> [S^{n-1}, S^{n-1}]
\end{CD}
\]
never hits a constant map. 

(iii)\qua
Suppose the restriction
\[
[X, *^{n-1}G/H]^G  
\to 
[X, *^{n-1}G/H]^H
\]
never hit the constant map for
$G=\Z/2,\ H=\{e\}$ and  $X =\C P^{2k-1}$ with the
$\Z/2$--action as in \fullref{CP}. Then the
Bauer--Furuta stable homotopy Seiberg--Witten invariant
(see \fullref{sec6}),
applied to closed Spin $4$--manifolds $M^4$ with 
$$b_1(M^4) = 0,\qquad k = - \frac{\sign(M^4)}{16},\qquad
b^+_2(M^4) = \level( \C P^{2k-1} ),$$
imply the
following geometric consequences for $M^4$: 

\begin{enumerate}
\item (adjunction inequality, see the work of Furuta, Kametani, Matsue and
Minami \cite{KM,FKMi,FKMaMi})\qua For any 
embedded oriented closed surface 
$\Sigma \subseteq M^4,$ 
\[
| 2g(\Sigma) - 2 | \geq [\Sigma]\cdot [\Sigma].
\]
\item (Ishida--LeBrun \cite{L,IL1,IL2})\qua Non-existence of
Einstein metrics and computations of
the Yamabe invariants under some circumstances.
\end{enumerate}
\end{remark}

In this way, the concept of level, which arises naturally
in ``non-existence'' problems, also show up
``non-triviality'' problems.

\section{Main Theorem}

We now state our main theorem, which partially answers
\fullref{question}.

\begin{theorem} \label{mt} 
{\rm(i)}\qua When $n:=\slink_{G/H}(X,Y),$
\begin{multline*}
\lim_{q\rightarrow \infty}
[X*\big( *^{q-1} G/H \big), 
Y*\big( *^{n+q-1} G/H \big)]^G \\[-1ex]
\longrightarrow 
\lim_{q\rightarrow \infty}
[X*\big( *^{q-1} G/H \big), 
Y*\big( *^{n+q-1} G/H \big)]^H
\end{multline*}
never hits a constant map.

{\rm(ii)}\qua When $n:=\link_{G/H}(X,Y)\leq 0,$
\[
[X*\big( *^{-n-1} G/H \big), Y]^G
\to 
[X*\big( *^{-n-1} G/H \big), Y]^H
\]
never hits a constant map.
\end{theorem}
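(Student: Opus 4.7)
The plan is to deduce both parts of Theorem~\ref{mt} from a single adjunction for joins. Writing $A := G/H$, note that $A \times W \cong G \times_H W$ as $G$-spaces (with $H$ acting on $W$ by restriction), and that $A * W$ is the homotopy pushout of the diagram $A \leftarrow A \times W \to W$. Consequently, giving a $G$-map $A * W \to Z$ is, up to $G$-homotopy, the same as giving a triple $(y_0, f, h)$ consisting of a point $y_0 \in Z^H$, a $G$-map $f : W \to Z$, and an $H$-equivariant homotopy from the constant map at $y_0$ to $f|_H$. Call this the Join Adjunction. The only direction I will use below is the easy one: any $G$-map $f : W \to Z$ whose underlying $H$-map is $H$-nullhomotopic extends to a $G$-map $A * W \to Z$.

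For part~(ii), set $W := *^{-n-1}A * X$. The definition of $n = \link_A(X, Y) \leq 0$ (case 2) yields $[W, Y]^G \neq \emptyset$ while $[A * W, Y]^G = [*^{-n}A * X, Y]^G = \emptyset$. If some $G$-map $f : W \to Y$ were to restrict to an $H$-nullhomotopic map, the Join Adjunction would produce an element of $[A * W, Y]^G$, a contradiction.

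For part~(i), I first reformulate $\slink_A(X, Y) = n$ in a uniform, case-free way: setting $X_s := *^{s-1}A * X$ and $Y_s := *^{s-1}A * Y$, it is equivalent to saying that for all large $s$, $[X_s, Y_{s+n}]^G \neq \emptyset$ while $[X_s, Y_{s+n-1}]^G = \emptyset$; both cases of the definition of $\link$ reduce to this after joining with enough copies of $A$. Next, a class in the stable $H$-limit represented by a constant map is the trivial class, since a constant map joined once with $A$ factors through the contractible cone $CA$ and hence is $H$-nullhomotopic. So if a class in $\lim_s [X_s, Y_{s+n}]^G$ restricts to the class of a constant map, then after passing sufficiently far along the colimit a representative $f_s : X_s \to Y_{s+n}$ is $H$-nullhomotopic. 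The Join Adjunction then extends $f_s$ to a $G$-map $X_{s+1} \to Y_{s+n} = Y_{(s+1)+(n-1)}$, witnessing $[X_{s+1}, Y_{(s+1)+(n-1)}]^G \neq \emptyset$, so $\slink_A(X, Y) \leq n - 1$, contradicting the assumption $\slink_A(X, Y) = n$.

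The main technical obstacle will be establishing the Join Adjunction with sufficient care, particularly the $G$-equivariance of the inverse construction through $G \times_H -$, and correctly identifying what ``the class of a constant map'' means inside the colimits defining the stable $G$- and $H$-limits. Once those foundational points are nailed down, both parts of the theorem become direct applications of the same one-line extension principle.
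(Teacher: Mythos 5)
Your proof is correct and follows the same strategy as the paper: both reduce (i) and (ii) to the single lemma that a $G$--map $X'\to Y'$ whose restriction to $H$ is $H$--nullhomotopic extends over $X'*(G/H)$, established via the homotopy pushout square $X'\leftarrow X'\times G/H \to G/H$ and the adjunction $[X',Y']^H\cong [X'\times G/H, Y']^G$. Your "Join Adjunction" is exactly the paper's three observations, and your explicit bookkeeping for how this lemma yields (i) and (ii) just spells out what the paper leaves terse.
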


\begin{corollary} {\rm (i)}\qua When $l:=\slevel_{G/H}(X),$
$$\lim_{q\rightarrow \infty}
[X*\big( *^{q-1} G/H \big), 
\big( *^{l+q-1} G/H \big)]^G
\to 
\lim_{q\rightarrow \infty}
[X*\big( *^{q-1} G/H \big), 
\big( *^{l+q-1} G/H \big)]^H$$
never hits a constant map.

{\rm (ii)}\qua When $c:=\colevel_{G/H}(Y),$
\[
[\big( *^{c-1} G/H \big), Y]^G
\to 
[\big( *^{c-1} G/H \big), Y]^H
\]
never hits a constant map.
\end{corollary}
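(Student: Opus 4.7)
The plan is to derive both parts of the corollary directly from \fullref{mt} by specializing one of the two arguments to the reference space $A = G/H$ that appears in the definitions of stable level and colevel. The content of the corollary is really just a renaming of the special cases $Y = G/H$ and $X = G/H$ of the theorem.

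For part (i), I would set $Y := G/H$ in \fullref{mt}(i). The identity $\slevel_{G/H}(X) = \slink_{G/H}(X, G/H) + 1$ forces the integer $n$ in the theorem to equal $l - 1$. The target space appearing in each hom-set then becomes
\[
G/H * (*^{n+q-1} G/H) \;=\; *^{n+q} G/H \;=\; *^{l+q-1} G/H,
\]
by the recursive definition $*^{k}A = A * (*^{k-1}A)$, so \fullref{mt}(i) delivers verbatim the conclusion of corollary (i).

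For part (ii), I would symmetrically set $X := G/H$ in \fullref{mt}(ii). The definition $\colevel_{G/H}(Y) = -\link_{G/H}(G/H, Y) + 1$ gives $\link_{G/H}(G/H, Y) = 1 - c$, which is nonpositive whenever $c \geq 1$, so the hypothesis $n \leq 0$ of the theorem is satisfied. The source exponent in the theorem then becomes $-n - 1 = c - 2$, and using $G/H * (*^{c-2} G/H) = *^{c-1} G/H$ once more, \fullref{mt}(ii) specializes to exactly the statement of corollary (ii).

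The only work is bookkeeping on exponents of iterated joins and confirming the sign hypothesis in (ii); I do not foresee a real obstacle here, since the hard content is already packaged in \fullref{mt}. If $\slevel_{G/H}(X)$ or $\colevel_{G/H}(Y)$ happens to be infinite, both sides of the corresponding statement are vacuous and nothing further needs checking.
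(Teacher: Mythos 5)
Your specialization is exactly the (implicit) argument the paper intends: the corollary is stated immediately after Theorem \ref{mt} with no separate proof precisely because it is the case $Y = G/H$ (resp.\ $X = G/H$) of the theorem, and your exponent bookkeeping $G/H * (*^{n+q-1}G/H) = *^{l+q-1}G/H$ with $n = l-1$ (resp.\ $G/H * (*^{c-2}G/H) = *^{c-1}G/H$ with $-n-1 = c-2$) is correct. The only thing worth flagging is that for part (ii) you should note explicitly that $c \geq 1$ is forced by the definitions (since $\link_{G/H}(G/H,Y) \leq 0$ whenever $[G/H,Y]^G \neq \emptyset$, which is the regime in which $*^{c-1}G/H$ is even meaningful), but you do touch on this and the paper itself treats the hypothesis as automatic.
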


\begin{proof}[Proof of \fullref{mt}]
For both (i) and (ii),
it suffices to show $[X'*(G/H),Y']^G \neq \emptyset$ 
if the restriction map
\[
[X',Y']^G \to [X',Y']^H  \cong [X'\times G/H, Y']^G
\]
hits a constant map.  But, this follows easily from the
following observations:

\begin{enumerate}
\item The restriction 
$ 
[X',Y']^G \to [X',Y']^H  \cong [X'\times G/H, Y']^G
$ 
is induced by the first projection
$ 
X'\times G/H \to X'.
$ 
\item
Any constant map in $[X',Y']^H$ corresponds to a map
factorizing the second projection
$ 
X'\times G/H \to G/H
$ 
in $[X'\times G/H, Y']^G.$
\item
There is a homotopy push-out diagram
\[
\begin{CD}
X'\times G/H @>>> G/H  \\
@VVV   @VVV \\
X'    @>{i}>>  X'*(G/H).
\end{CD}
\proved
\]
\end{enumerate}
\end{proof}

From \fullref{mt} and the $G$--join theorem \cite{Min,Sch}
(see \fullref{stable}~(ii)),
we obtain the following important consequence.

\begin{theorem} \label{consequence}
$G=\Z/2,\ H=\{e\},\ X:$
a free $\Z/2$--space such that
\[
\dim X \leq 2n - 1,
\]
where $n = \slevel_{\Z/2}(X).$
Then
\[
[X,S^{n-1}]^{\Z/2} \to [X,S^{n-1}]
\]
never hit a constant map.
\end{theorem}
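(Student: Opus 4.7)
The plan is to deduce the statement from Corollary~(i) of \fullref{mt} combined with the $G$--join theorem of Minami~\cite{Min} and Schmidt~\cite{Sch}.

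First I would specialize Corollary~(i) to $G=\Z/2$, $H=\{e\}$. Then $G/H=\Z/2=S^0$ and $*^{q-1}\Z/2=S^{q-1}$ with antipodal action, so with $l=n=\slevel_{\Z/2}(X)$ the corollary asserts that
\[
\lim_{q\to\infty}[X*S^{q-1},S^{n+q-1}]^{\Z/2}\longrightarrow\lim_{q\to\infty}[X*S^{q-1},S^{n+q-1}]
\]
never hits a constant map.

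Second, I would use the $G$--join theorem to identify $[X,S^{n-1}]^{\Z/2}$ with this stable limit. The hypothesis $\dim X\leq 2n-1$ is precisely the dimension input (the same one used in \fullref{stable}(iii) to obtain $\slevel_{\Z/2}=\level_{\Z/2}$) that makes each equivariant join-with-$\Z/2$ map
\[
[X*S^{q-1},S^{n+q-1}]^{\Z/2}\longrightarrow[X*S^{q},S^{n+q}]^{\Z/2}
\]
a bijection; thus the natural map $[X,S^{n-1}]^{\Z/2}\to\lim_q[X*S^{q-1},S^{n+q-1}]^{\Z/2}$ is bijective. Non-equivariantly, the corresponding diagram is just iterated ordinary suspension, and Freudenthal gives the bijection $[X,S^{n-1}]\to\lim_q[X*S^{q-1},S^{n+q-1}]$ after finitely many steps.

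Finally, these bijections fit into a commutative square with the (unstable and stable) restriction-to-$H$ maps. A constant map $X\to S^{n-1}$ is sent under join with the identity on $S^{q-1}$ to a map homotopic to a constant, so constant homotopy classes correspond to constant ones under the identifications. The non-constancy from the first step then passes down to the non-constancy of $[X,S^{n-1}]^{\Z/2}\to[X,S^{n-1}]$, which is exactly the claim. The main obstacle I anticipate is the careful invocation of the $G$--join theorem at the borderline dimension $\dim X\leq 2n-1$ to get bijectivity (and not mere surjectivity) of each join-with-$\Z/2$ map on the equivariant side, together with the verification that constant maps are preserved under these join identifications; both are routine given the cited results, but need to be spelled out.
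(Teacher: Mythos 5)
Your proposal takes the same route the paper intends: deduce the unstable statement from Corollary~(i) of \fullref{mt} (i.e.\ the stable non-constancy of the restriction map) via a commutative square relating the unstable and stable restriction maps, with the $G$--join theorem and the hypothesis $\dim X \le 2n-1$ guaranteeing that $\slevel_{\Z/2}(X)=\level_{\Z/2}(X)=n$. The paper states the theorem with essentially no proof beyond citing \fullref{mt} and the $G$--join theorem, so your filling-in is in the right spirit and is correct.

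One small inaccuracy worth flagging: on the non-equivariant side you assert that Freudenthal gives a \emph{bijection} $[X,S^{n-1}]\to\lim_q[X*S^{q-1},S^{n+q-1}]$ under $\dim X\le 2n-1$. At this borderline dimension the first few suspension maps $[X,S^{n-1}]\to[\Sigma X,S^n]\to\cdots$ are not bijective (Freudenthal requires roughly $\dim X\le 2n-4$ for bijectivity and $\le 2n-3$ for surjectivity at the first step), so the total map to the colimit need not be one. However this does not damage your argument: the only facts actually used are that the square commutes, that the equivariant stabilization $[X,S^{n-1}]^{\Z/2}\to\lim_q[X*S^{q-1},S^{n+q-1}]^{\Z/2}$ exists (with nonempty source by $\slevel=\level=n$, which is where the $G$--join theorem and the dimension hypothesis genuinely enter), and that joining a nullhomotopic map with the identity on $S^{q-1}$ yields a nullhomotopic map (it factors through the contractible cone $\{y_0\}*S^{q-1}$). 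You verify the last point explicitly, so the conclusion descends to the unstable level as claimed; you could simply drop the appeal to a non-equivariant Freudenthal bijection.
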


\fullref{consequence} has the following applications:
\begin{enumerate}
\item It recovers the classical Borsuk--Ulam theorem (which was
quoted as ``the other version'' in the sense
of \fullref{qr}~(ii)).
\item It yields non-trivial family of elements of 
$\pi^*(\R P^{2m-1})$ together with Stolz' result.
\item It offers some applications to 4--manifolds via 
the Bauer--Furuta Seiberg--Witten invariants, along the
line of \fullref{qr}~(iii). 
\end{enumerate}
To give some flavor, we single out a statement about adjunction
inequalities, by applying Theorems~\ref{consequence} and~\ref{FK}:

\begin{theorem} \label{adjunction}
Let $M^4$ be a closed Spin $4$--manifolds $M^4$ with 
$b_1(M^4) = 0$ such that
\[
b^+_2(M^4) = 
\begin{cases}
- \frac{\sign(M^4)}{8} + 1 \quad &\text{if}\ 
- \frac{\sign(M^4)}{16} \equiv 1 \mod 4 \\
- \frac{\sign(M^4)}{8} + 2 \quad &\text{if}\ 
- \frac{\sign(M^4)}{16} \equiv 2 \mod 4 \\
- \frac{\sign(M^4)}{8} + 3 \quad &\text{if}\ 
- \frac{\sign(M^4)}{16} \equiv 3 \mod 4\\
- \frac{\sign(M^4)}{8} + 3 \quad &\text{if}\ 
- \frac{\sign(M^4)}{16} \equiv 4 \mod 4
\end{cases}
\]
Then, 
for any embedded oriented closed surface 
$\Sigma \subseteq M^4,$ 
\[
| 2g(\Sigma) - 2 | \geq [\Sigma]\cdot [\Sigma].
\]
\end{theorem}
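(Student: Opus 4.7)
The plan is to deduce the adjunction inequality by combining Theorem~\ref{FK} with Theorem~\ref{consequence} and then invoking the Bauer--Furuta machinery packaged in Remark~\ref{qr}(iii). First I would set $k := -\sign(M^4)/16 \in \Z$ (an integer by Rokhlin's theorem, since $M^4$ is Spin). The hypothesis on $b_2^+(M^4)$ then rewrites uniformly as $b_2^+(M^4) = 2k + r(k)$, where $r(k) \in \{1,2,3,3\}$ according to $k \bmod 4$, which matches exactly the Furuta--Kametani lower bound for $\level(\C P^{2k-1})$.

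Next, I would verify that $n := b_2^+(M^4)$ coincides with $\level_{\Z/2}(\C P^{2k-1})$. Theorem~\ref{CP} gives $b_2^+(M^4) \geq \level_{\Z/2}(\C P^{2k-1})$, while Theorem~\ref{FK} gives $\level_{\Z/2}(\C P^{2k-1}) \geq 2k + r(k)$; combined with the numerical hypothesis these collapse to equalities, so
$$n \;=\; b_2^+(M^4) \;=\; \level_{\Z/2}(\C P^{2k-1}) \;=\; 2k + r(k).$$
Since $n \geq 2k+1$, the dimension inequality $\dim \C P^{2k-1} = 4k - 2 \leq 2n - 1$ holds, so Remark~\ref{stable}(iii) forces $\slevel_{\Z/2}(\C P^{2k-1}) = \level_{\Z/2}(\C P^{2k-1}) = n$. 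Theorem~\ref{consequence}, applied to $X = \C P^{2k-1}$ with the $\Z/2$--action of Theorem~\ref{CP}, then yields that the restriction
$$[\C P^{2k-1}, S^{n-1}]^{\Z/2} \longrightarrow [\C P^{2k-1}, S^{n-1}]$$
never hits a constant map.

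Finally, this is precisely the non-triviality hypothesis of Remark~\ref{qr}(iii) for $X = \C P^{2k-1}$ and the parameters $b_1(M^4)=0$, $k = -\sign(M^4)/16$, $b_2^+(M^4) = \level(\C P^{2k-1})$. That remark packages the Bauer--Furuta stable homotopy Seiberg--Witten invariant into a direct implication to the adjunction inequality $|2g(\Sigma) - 2| \geq [\Sigma]\cdot[\Sigma]$ for any embedded oriented closed surface $\Sigma \subseteq M^4$, which is the desired conclusion.

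The main obstacle is the geometric/analytic content hiding in Remark~\ref{qr}(iii)---the translation from non-triviality of an equivariant restriction to the adjunction inequality via the Bauer--Furuta invariant, as developed in \cite{KM,FKMi,FKMaMi}. Modulo that dictionary, the present argument is purely formal: the hypothesis on $b_2^+(M^4)$ is tuned precisely so that the Furuta--Kametani lower bound on $\level(\C P^{2k-1})$ is saturated, stable level equals level by the $G$--join theorem, and Theorem~\ref{consequence} fires cleanly.
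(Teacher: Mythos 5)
Your proof is correct and follows the paper's intended route: the paper merely points to Theorems~\ref{consequence} and~\ref{FK}, and you have filled in the chain through Theorem~\ref{CP} (to turn the Furuta--Kametani lower bound into an equality for $\level(\C P^{2k-1})$), Remark~\ref{stable}(iii) (to pass to $\slevel$), Theorem~\ref{consequence} (for non-triviality), and Remark~\ref{qr}(iii) (for the geometric conclusion). One small slip worth flagging: Remark~\ref{stable}(iii) requires $\dim X \le 2\,\slevel_{\Z/2}(X) - 1$, whereas you verify $\dim \C P^{2k-1} \le 2\,\level_{\Z/2}(\C P^{2k-1}) - 1$, which is a priori weaker since $\slevel \le \level$; this is closed either by citing the paper's own assertion in Remark~\ref{stable}(iii) that $\C P^{2k-1}$ is among the examples satisfying the hypothesis, or by noting that the stable version of Theorem~\ref{FK} (which, as the paper remarks, is what Furuta--Kametani actually proved) already gives $\slevel_{\Z/2}(\C P^{2k-1}) \ge 2k+1$, whence $4k-2 \le 2\,\slevel - 1$.
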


We remark the case $- \frac{\sign(M^4)}{16} = 1,2,3$
were already treated in more direct ``non-triviality'' approach
(cf Kronheimer--Mrowka~\cite{KM}, and Furuta, Kametani, Matsue and Minami
\cite{FKMi,FKMaMi}).
On the other hand, we have shown it by reducing to a
``non-existence'' result: \fullref{FK}.



\section{Nilpotency rules!}
\label{sec6}

In this section, we explains the conceptual reason why
we can never prove the $11/8$--conjecture affirmatively
by use of the Bauer--Furuta stable homotopy Seiberg--Witten
invariants.

For this purpose, we briefly recall the Furuta--Bauer stable 
homotopy Seiberg--Witten invariant \cite{F2,BF}.
We begin with notations:


Let $M^4$ be an oriented closed 4--manifold with $b_1(M^4)=0,$
$c$ a $\Spin^c$--structure of $M^4,$ and
[$o_{M^4}$] an orientation of $H^+(M^4),$ the maximal positive definite 
subspace of $H^2(M^4,\R).$
Set
$$m := \frac{c_1(c)^2- \sign(M^4)}{8}, \qquad\qquad n :=b^+_2(M^4)$$
and \emph{assume $m\geq 0,$} by changing the orientation
of $M^4$ if necessary.


Then the Furuta--Bauer stable homotopy Seiberg--Witten invariant
$SW(M^4,c,o_{M^4})$ for the data $(M^4,c,o_{M^4})$ is defined so that
$$SW(M^4,c,o_{M^4}) \in \{S(\C^{m}), S({\R}^n)\}^{U(1)}  \\
:= \!\lim_{p,q\rightarrow \infty}
[S(\C^{p+m} \oplus {\R}^{q}),
 S(\C^{p}\oplus {\R}^{q+n})]^{U(1)}$$
Suppose the $\Spin^c$--structure $c$ comes from
a $\Spin$--structure $s,$ and set:
$$k := - \frac{\sign(M^4)}{16}, \qquad\qquad l :=b^+,$$
where we \emph{assume $k\geq 0,$} by changing
the orientation of $M^4,$ if necessary.
We also prepare the following representation theoretic
notations:
\begin{center}
\begin{tabular}{ll}
$\Pin_2$ & The closed subgroup of the quaternions
$\HH,$ generated by $j$ and \\
& $U(1) = \{ \cos\theta + i\sin\theta \mid 0\leq \theta < 2\pi \}$. \\
$\HH$ & The quaternions $\HH,$ regarded as 
a right $\Pin_2$--module by the right \\
& $\Pin_2 (\subset \HH )$ multiplication. \\
$\tilde{\R}$ & $\R$ regarded as a right
$\Pin_2$--module via the sign representation \\
& of $\{\pm 1\}\cong \Pin_2/ U(1)$.
\end{tabular}
\end{center}
Then the  Furuta--Bauer stable homotopy Seiberg--Witten
invariant $SW(M^4,s,o_{M^4})$ for the data $(M^4,s,o_{M^4})$ is defined 
so that
$$SW(M^4,s,o_{M^4}) \in \{S(\HH^{k}),
S({\tilde\R}^l)\}^{\Pin_2}
:= \lim_{p,q\rightarrow \infty}
[S(\HH^{p+k} \oplus \tilde{\R}^{q}),
 S(\HH^{p}\oplus \tilde{\R}^{q+l})]^{\Pin_2}$$
Furthermore, if we regard a $\Spin$--structure $s$ as
as $\Spin^c$--structure $c,$ then the forgetful map
via $U(1) \subseteq \Pin_2$ induces the natural
correspondence:
\begin{align*}
  \{S(\HH^{k}), S({\tilde\R}^l)\}^{\Pin_2} &\to
  \{S(\C^{2k}), S({\R}^l)\}^{U(1)}  \\
  SW(M^4,s,o_{M^4}) &\mapsto SW(M^4,c,o_{M^4})
\end{align*}
Now the key to relate the Bauer--Furuta stable homotopy
Seiberg--Witten invariants to our discussions in the
previous sections is the following observation,
which follows from Furuta's original work~\cite{F1}
(which showed $l\geq 2k+1$) and the
$G$--join theorem \cite{Min,Sch}:  If 
$\{S(\HH^{k}), S({\tilde\R}^l)\}^{\Pin_2}  \neq \emptyset,$
then
\begin{equation} \label{translation}
\begin{CD}
 \{S(\HH^{k}), S({\tilde\R}^l)\}^{\Pin_2} @>>>
  \{S(\C^{2k}), S({\R}^l)\}^{U(1)}  \\
@|   @|    \\
[X,S^{l-1}]^{\Z/2} @>>> [X,S^{l-1}],
\end{CD}
\end{equation}
where $X = \C P^{2k-1}$ with the $\Z/2$--action
as in \fullref{CP}.

Now we are ready to offer a conceptual explanation
why we can never prove the $11/8$--conjecture by
use of the Bauer--Furuta stable homotopy Seiberg--Witten 
invariants.  

We first note we should
show
\begin{equation} \label{hope}
l < 3k \implies
\{S(\HH^{k}), S({\tilde\R}^l)\}^{\Pin_2} = \emptyset.
\end{equation}
to prove the $11/8$--conjecture via the Bauer--Furuta
stable homotopy Seiberg--Witten invariants.
Then consider the following key commutative diagram,
whose horizontal arrows are induced by the $(N-1)$--fold
iterated join map:
\[
\begin{CD}
\{S(\HH^{k}), S({\tilde\R}^l)\}^{\Pin_2}
@>{*^{N-1}}>> 
\{S(\HH^{Nk}), S({\tilde\R}^{Nl})\}^{\Pin_2} \\
@VVV         
@VVV  \\
\{S(\C^{2k}), S({\R}^l)\}^{U(1)}
@>{*^{N-1}}>>
\{S(\C^{2Nk}), S({\R}^{Nl})\}^{U(1)}
\end{CD}
\]
Now, because of the Bauer--Furuta Seiberg--Witten invariants 
of a $K3$--surface with $\Spin$--structure, we see
\[
\{S(\HH^{1}), S({\tilde\R}^3)\}^{\Pin_2} \neq \emptyset
\]
Thus we apply the above key commutative diagram with
$k=1,l=3,N\gg0.$  Then, because of the
Nilpotency Theorem \cite{nilpotency}, the bottom
horizontal arrow is the trivial map for sufficiently
large $N$ (actually, straight-forward computations show
this is trivial for $N\geq 5$).  Therefore, 
the right vertical map hits the constant map. 

Now \eqref{translation} allows us to apply 
\fullref{consequence}, which implies
\[
\{S(\HH^{N}), S({\tilde\R}^{3N-1})\}^{\Pin_2}
           \neq \emptyset.
\]
Of course, in view of \eqref{hope},
this means a failure of proving the
$11/8$--conjecture via the Bauer--Furuta stable homotopy
Seiberg--Witten invariants.

\bibliographystyle{gtart}
\bibliography{link}

\end{document}